\setlist[enumerate]{topsep=2pt, itemsep=2pt}
\newtheorem{thm}{Theorem}[section]
\newtheorem{cor}[thm]{Corollary}
\newtheorem{prop}[thm]{Proposition}
\newtheorem{lem}[thm]{Lemma}
\newtheorem{quest}[thm]{Question}
\theoremstyle{definition}
\newtheorem{defn}[thm]{Definition}
\newtheorem{exmp}[thm]{Example}
\theoremstyle{remark}
\newtheorem{rem}[thm]{Remark}
\numberwithin{thm}{section}
\numberwithin{equation}{section}
\newcommand{\z}{\mathbb{Z}}
\newcommand{\q}{\mathbb{Q}}
\newcommand{\ad}{\mathbb{A}_k}
\DeclareMathOperator{\spec}{Spec}
\DeclareMathOperator{\Br}{Br}
\DeclareMathOperator{\br}{Br}
\DeclareMathOperator{\inv}{inv_v}
\DeclareMathOperator{\im}{im}
\DeclareMathOperator{\Ho}{H}
\DeclareMathOperator{\Sign}{Sign}
\DeclareMathOperator{\ev}{ev}
\DeclareMathOperator{\brv}{Br_{vert}}
\DeclareMathOperator{\res}{res}
\DeclareMathOperator{\Pic}{Pic}
\DeclareMathOperator{\Hom}{Hom}
\newcommand*{\myproofname}{Proof of \Cref{thm:ShaTrivial}}
\theoremstyle{definition}
\DeclareFontFamily{U}{wncy}{}
\DeclareFontShape{U}{wncy}{m}{n}{<->wncyr10}{}
\DeclareSymbolFont{mcy}{U}{wncy}{m}{n}
\DeclareMathSymbol{\Sh}{\mathord}{mcy}{"58} 
\DeclareMathSymbol{\Be}{\mathord}{mcy}{"42}
\title[Brauer groups of conic bundles over elliptic curves]{Brauer groups of conic bundles over elliptic curves}
\author[A. Alfaraj]{Abdulmuhsin Alfaraj}
\address{Abdulmuhsin Alfaraj\\
	Department of Mathematical Sciences \\
	University of Bath \\
	Claverton Down \\
	Bath \\
	BA2 7AY \\
	UK.}
\urladdr{}
\begin{document}

\begin{abstract}
	 We study the Brauer groups of regular conic bundles over elliptic curves defined over a number field $k$. We explicitly compute the Brauer group of the conic bundle when the singular fibres lie above $k$-points that are divisible by $2$ in $E(k)$, and the corresponding ramification fields are isomorphic. We apply the result to compute the Brauer group of a class of surfaces analogous to that of Ch\^{a}telet surfaces.  We investigate Brauer-Manin obstructions to weak approximation coming from the real places on such surfaces.
\end{abstract}
\maketitle
\tableofcontents
\setcounter{equation}{0}

\section{Introduction}
Let $C\subset \mathbb{A}^2(x,y)$ be a smooth curve over a number field $k$. One may ask the following question: how often is the $y$-coordinate (or the $x$-coordinate) of a $k$-point $(x,y)\in C(k)$ a sum of two squares? This is equivalent to asking how often does the fibre over $P\in C(k)$ along the conic bundle 
$$X:\quad x_0^2+x_1^2=yx_2^2 \quad \subset \mathbb{P}^2(x_0,x_1,x_2)\times \mathbb{A}^2(x,y)$$
have a $k$-point. When $C$ is the affine line, this is the same as asking how often is an element of $k$ a sum of two squares; over $\q$, we know by a famous theorem of Landau and Ramanujan that almost all integers are not sums of two squares (when ordered by their absolute value). By Faltings's theorem, the only remaining interesting case is when $C$ is an elliptic curve $E$ of positive rank. Bhakta, Loughran, Myerson, and Nakahara \cite{ECsieve} studied this question for an elliptic curve $E$ over $\q$ given by an integral Weierstrass
equation. They pose the following question: 
\begin{quest}
	Can the set $\{P\in E(\q)\> : \> X_P(\q)\neq \emptyset\}$ be infinite?
\end{quest} 
They proved that almost all multiples of a fixed $\q$-point in $E(\mathbb{R})^0$---the real connected component containing the identity---are not sums of two squares (when ordered by an appropriate height function). 

We build first steps to approach this problem via the study of the Brauer-Manin obstruction on conic bundles over elliptic curves. Let $X$ be a conic bundle over an elliptic curve defined over a number field $k$. Define $X(\ad)_\bullet$ to be $X(\ad)$ where we replace $X(k_v)$ in the restricted product with the set of connected components of $X(k_v)$, for all archimedean places $v$. One may ask the following question.
\begin{quest}
 	When is
	\begin{equation}\label{eqn:mainQ}
		\overline{X(k)} = X(\ad)_\bullet^{\Br}?
	\end{equation}
\end{quest}
If $X$ is a curve of genus $1$ and the Tate-Shafarevich group of the
Jacobian is finite, then \eqref{eqn:mainQ} holds (see \cite[Theorem 6.2.3, Corollary 6.2.4]{Sko01}). If $X$ is a Severi-Brauer scheme over an elliptic curve $E$ with finite Tate-Shafarevich group, then \eqref{eqn:mainQ} holds (see \cite[Proposition 14.3.9]{CTSK}). In particular, this is true for a regular conic bundle over $E$ with smooth fibres. If the conic bundle has singular fibres, then this is no longer the case. Indeed, there exists a conic bundle $X$ over an elliptic curve $E$ defined over a real quadratic field $k$ such that $X(\ad)_\bullet^{\Br}\neq \emptyset$ and $X(k)=\emptyset$ (see \cite[Theorem 14.3.7]{CTSK}). As a consequence of the results of this paper, we prove the following.

\begin{thm}\label{thm:intro}
	There exists an elliptic curve $E$ over $\q$ and a regular conic bundle $\pi:X\rightarrow E$ such that $X(k)\neq \emptyset$ and
	$$X(\ad)_\bullet^{\Br X} \neq X(\ad)_\bullet^{\pi^* (\Br E)}.$$
\end{thm}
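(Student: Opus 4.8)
The plan is to instantiate the explicit description of $\Br X$ obtained in the main computation of this paper at a single carefully chosen example, to extract from it a quaternion class $\alpha$ lying outside $\pi^*(\Br E)$, and to show that $\alpha$ obstructs weak approximation at the real place in a manner invisible to the pulled-back classes. First I would fix an elliptic curve $E/\q$ of positive rank together with a configuration of singular fibres lying over $\q$-points that are divisible by $2$ in $E(\q)$ and whose ramification fields are isomorphic; this is exactly the setting in which the main theorem applies and returns generators for $\Br X$ modulo constant classes. Among these generators there is a quaternion class $\alpha$ whose residue along a singular fibre is nontrivial, whence its image in $\Br X / \pi^*(\Br E)$ is nonzero, since every class in $\pi^*(\Br E)$ is constant along the fibres and so unramified there. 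I would choose the defining data so that $X$ is regular and carries a visible rational point on a smooth fibre, securing $X(\q)\neq\emptyset$.

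Because $\pi^*(\Br E)\subseteq\Br X$, the inclusion $X(\ad)_\bullet^{\Br X}\subseteq X(\ad)_\bullet^{\pi^*(\Br E)}$ is automatic, so it remains only to prove that it is strict. For this I would produce an adelic point $(x_v)\in X(\ad)_\bullet$ satisfying
\[
\sum_v \inv\bigl(\beta(x_v)\bigr)=0 \ \text{ for every } \beta\in\pi^*(\Br E), \qquad \sum_v \inv\bigl(\alpha(x_v)\bigr)=\tfrac12;
\]
such a point lies in $X(\ad)_\bullet^{\pi^*(\Br E)}$ but not in $X(\ad)_\bullet^{\Br X}$. The decisive ingredient is the real place: since $\alpha$ is built from the norm form of the conic, the invariant $\inv(\alpha(x_\infty))$ is governed by a sign that is nonconstant across the connected components of $X(\mathbb{R})$, so placing $x_\infty$ in the appropriate component forces the value $\tfrac12$. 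At the finite places I would arrange the invariants of $\alpha$ to vanish and, independently, arrange the invariants of every $\beta\in\pi^*(\Br E)$ to cancel, which is possible precisely because $\alpha$ records fibral data to which the base classes are blind.

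The main obstacle is the simultaneous bookkeeping required of the example. The singular fibres must sit over $2$-divisible $\q$-points with isomorphic ramification fields so that the formula for $\Br X$ is valid; the surface must be regular and possess a rational point; and the chosen $\alpha$ must both escape $\pi^*(\Br E)$ and have a genuinely nonconstant real invariant across the components of $X(\mathbb{R})$ while pairing trivially with all of $\pi^*(\Br E)$. Verifying the last point reduces to an explicit Hilbert-symbol computation on each real connected component, and checking that this is compatible with $X(\q)\neq\emptyset$ is the delicate step that makes the resulting obstruction to weak approximation genuine rather than vacuous.
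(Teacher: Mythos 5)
Your route is the paper's own: instantiate Theorem \ref{thm:CB-pts-in-2EQ} through the Ch\^atelet-type bundles of Corollary \ref{exmp:chat-surface}, then obtain the strict inclusion from a real-place evaluation of one explicit generator, as in Proposition \ref{prop:BMOtoWAonCoverE}. But the two steps you defer are exactly where the content lies, and as stated they have genuine gaps. First, your assertion that $\operatorname{inv}_\infty(\alpha(x_\infty))$ is ``governed by a sign that is nonconstant across the connected components of $X(\mathbb{R})$'' is not automatic: for $a<0$ the real points of $X$ lie only over the locus $f\geq 0$ in $E(\mathbb{R})$, so the ratio of tangent lines $\ell_P/\ell_Q$ defining $\alpha$ must be shown to change sign \emph{inside} that locus, not merely somewhere on $E(\mathbb{R})$. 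This is precisely what the proof of Proposition \ref{prop:BMOtoWAonCoverE} establishes, using the hypotheses $n>2$ and $S\cap E[2]=\emptyset$ and a case analysis on the segments $E_1,E_2,E'_2,E_3$ of $E(\mathbb{R})$ cut out by the $x$-coordinates of the $P_i$: whichever of the alternatives ($f>0$ on $E_1\cup E'_2$, or $f>0$ on $E_2\cup E_3$) holds, the sign of $\ell_1/\ell_3$ attains both values on the positivity locus. Without such an analysis your example could be vacuous, since the sign change might occur only where the real fibres are empty; flagging ``compatibility with $X(\q)\neq\emptyset$'' as the delicate point misidentifies the issue ($X(\q)\neq\emptyset$ is actually free here, as each singular fibre is a pair of conjugate lines meeting in a rational point).

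Second, your plan to ``independently arrange the invariants of every $\beta\in\pi^*(\Br E)$ to cancel'' at the finite places is not an argument: $\Br E$ is in general infinite, and all classes are evaluated at the same adelic point, so $\alpha$ and the $\beta$'s cannot be handled independently. The mechanism that makes this work in the paper rests on a hypothesis you never impose, namely that $E(\mathbb{R})$ is connected. Then the real evaluation of any $\pi^*\beta$ factors through the connected set $E(\mathbb{R})$ and is locally constant, hence constant on $X(\mathbb{R})$; so one starts from the diagonal adelic point attached to a rational point, where every Brauer sum vanishes, and swaps only the real component to flip $\sum_v \operatorname{inv}_v(\alpha)$ by $1/2$ while leaving every $\sum_v \operatorname{inv}_v(\pi^*\beta)$ untouched. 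Without connectedness, moving $x_\infty$ between components of $X(\mathbb{R})$ may move $\pi(x_\infty)$ between components of $E(\mathbb{R})$ and change the invariants of some $\pi^*\beta$, and the modified point need not stay in $X(\ad)_\bullet^{\pi^*(\Br E)}$. A further inaccuracy: your inference that a nontrivial residue at a singular fibre forces $\alpha\neq 0$ in $\Br(X)/\pi^*(\Br E)$ is false as stated, because $\rho^{-1}(\Br X)\rightarrow \Br X$ kills the class $A$ of the generic conic; $A$ itself has nontrivial residues at every point of $S$, yet $\rho(A)=0$. One needs $\partial(\alpha)\notin\{0,\partial(A)\}$, which is why the generators $B_i$ of Theorem \ref{thm:CB-pts-in-2EQ}, with residue vectors supported on exactly two coordinates, are used and why $|S|>2$ matters.
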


The main objective of this paper is to study the Brauer groups of conic bundles over elliptic curves.  These surfaces can be seen as the natural next case to study after
conic bundles over the projective line. The Brauer group of conic bundles over curves are generally determined by the points in the base over which the fibres are singular. The problem then is reduced to finding Brauer elements on the curve that ramify at such points with prescribed residues. For the projective line, the Faddeev exact sequence allows us to determine such Brauer elements completely (see Remark \ref{rem:es1}). For curves of higher genus, the situation becomes more difficult to approach. Nevertheless, when the base is an elliptic curve $E$ over $k$, and the set of points over which the fibers are singular consists
of $k$-rational points that are divisible by $2$ in $E(k)$, and the corresponding ramification fields are isomorphic, we are able to give a complete description of the Brauer group of the conic bundle (see Theorem \ref{thm:CB-pts-in-2EQ}). We use this to fully describe the Brauer group for a class of examples of bundles over elliptic curves which resembles that of Ch\^{a}telet surfaces (see Example \ref{exmp:chat-surface}). 

\vspace{10pt}

\noindent\textbf{Acknowledgements.} I would like to thank my supervisor Daniel Loughran for his endless support. I would also like to thank Alexei Skorobogatov for the proof of Proposition \ref{prop:BrCminusP} and Jean-Louis Colliot-Thélène for useful discussions.

\section{Preliminaries}

\subsection{The vertical Brauer group}

\begin{defn}[{{\cite[11.1.1]{CTSK}}}]
	Let $f:X\rightarrow Y$ be a dominant morphism of schemes where $Y$ is integral with generic point $i:\eta\rightarrow Y$ and let $X_\eta$ be the generic fibre of $f$. Writing $j:X_\eta\rightarrow X$ for the natural inclusion, we have the following commutative diagrams
	\[\begin{tikzcd}
		{X_\eta} & X && {\Br (X_\eta)} & {\Br (X)} \\
		\eta & Y & {} & {\Br ( \eta)} & {\Br (Y)}
		\arrow["j", hook, from=1-1, to=1-2]
		\arrow["f", from=1-2, to=2-2]
		\arrow[from=1-1, to=2-1]
		\arrow["i", from=2-1, to=2-2]
		\arrow["{j^*}"', from=1-5, to=1-4]
		\arrow["{f^*}"', from=2-5, to=1-5]
		\arrow["{i^*}"', from=2-5, to=2-4]
		\arrow["\rho", from=2-4, to=1-4]
	\end{tikzcd}\]
	The \textit{vertical Brauer group} of $X/Y$ is 
	$$\Br_{\text{vert}} (X/Y):=\{ A\in \Br(X)\>:\>  j^*(A) \in \rho(\Br(\eta))\}.$$
\end{defn}

\begin{rem}
	If $X$ is regular and integral with generic point $\eta'$, then by \cite[Theorem 3.5.5]{CTSK} we have 
	$$\Br(X)\subset \Br(X_\eta)\subset \Br(\eta').$$
	Thus $\brv (X/Y)=\rho(\rho^{-1} (\br X))$, giving the exact sequence
	$$0\rightarrow \ker \rho \rightarrow \rho ^{-1}(\br(X))\rightarrow \brv(X/Y)\rightarrow 0.$$ 
\end{rem}

Let $f:X\rightarrow Y$ be a dominant flat morphism of smooth, proper, and geometrically integral varieties over a field $k$ of characteristic zero. Let $Y^{(1)}$ be the set of codimension $1$ points of $Y$ and write $X_P$ for the fibre of $X$ over $P\in Y^{(1)}$. For $P\in Y^{(1)}$, the fibre $X_P$ is a union of codimension $1$ points $V\subset X$ each of multiplicity $m_V$. The restriction map
$$  \text{res}_{k(V)/k(P)}: \Ho^1(k(P), \mathbb{Q}/\mathbb{Z}) \rightarrow  \Ho^1(k(V), \mathbb{Q}/\mathbb{Z})$$ 
induces via the morphism $X_P\rightarrow  P$ the map 
$$m_V \cdot \text{res}_{k(V)/k(P)}: \Ho^1(k(P), \mathbb{Q}/\mathbb{Z}) \rightarrow \bigoplus_{V\subset Y^(1)} \Ho^1(k(V), \mathbb{Q}/\mathbb{Z}).$$

\begin{prop}[{{\cite[p.266]{CTSK}}}]
	Let $f:X\rightarrow Y$ be a dominant flat morphism of smooth, proper, and geometrically integral varieties over a field $k$ of characteristic zero. Then we have a commutative diagram of exact rows
	\begin{equation}\label{diag:vertbr}
		\begin{tikzcd}
			0 & {\Br(X)} & {\Br(X_\eta)} & {\displaystyle\bigoplus_{P\in Y^{(1)}} \displaystyle\bigoplus_{V\subset X_P}\Ho^1(k(V), \mathbb{Q}/\mathbb{Z})} \\
			0 & {\Br(Y)} & {\Br(k(Y))} & {\displaystyle\bigoplus_{P\in Y^{(1)}} \Ho^1(k(P), \mathbb{Q}/\mathbb{Z})}
			\arrow[from=1-1, to=1-2]
			\arrow[from=1-2, to=1-3]
			\arrow["{\{\partial_V\}}", from=1-3, to=1-4]
			\arrow[from=2-1, to=2-2]
			\arrow["{f^*}", from=2-2, to=1-2]
			\arrow[from=2-2, to=2-3]
			\arrow[from=2-3, to=1-3]
			\arrow["{\{\partial_P\}}", from=2-3, to=2-4]
			\arrow[from=2-4, to=1-4]
		\end{tikzcd}
	\end{equation}
	where $\partial_P$ and $\partial_V$ denote the residue maps.  
\end{prop}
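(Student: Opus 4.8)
The plan is to exhibit both rows as instances of one Brauer-group residue sequence attached to a smooth variety, and then to verify that the two squares commute by functoriality of pullback and of residues. Recall the \emph{purity} exact sequence: for any smooth integral $k$-variety $Z$ with function field $k(Z)$, since $k$ has characteristic zero,
\[
0 \to \Br(Z) \to \Br(k(Z)) \xrightarrow{\{\partial_W\}} \bigoplus_{W\in Z^{(1)}} \Ho^1(k(W),\mathbb{Q}/\mathbb{Z})
\]
is exact. Injectivity of the first map is the inclusion $\Br(Z)\subset\Br(k(Z))$ recorded in the Remark above (\cite[Theorem 3.5.5]{CTSK}), and exactness at $\Br(k(Z))$---the statement that $\Br(Z)$ consists exactly of the classes unramified at every codimension $1$ point---is purity for the Brauer group in characteristic zero (see \cite{CTSK}). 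Taking $Z=Y$ yields the bottom row verbatim.

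Next I would apply the same sequence to $Z=X$ and reorganise the target. Since $X$ is smooth and integral with $k(X)=k(X_\eta)$, the subgroup $\Br(X_\eta)\subset\Br(k(X))$ is precisely the set of classes unramified along those $W\in X^{(1)}$ lying over the generic point $\eta$ of $Y$, as these are the codimension $1$ points of the regular scheme $X_\eta$. It then remains to control the codimension $1$ points of $X$ lying over proper closed subsets of $Y$, and here flatness is decisive: because $f$ is flat of constant relative dimension $d=\Dim X-\Dim Y$, every component of the fibre over a point $Q\in Y$ of codimension $c$ has codimension $c$ in $X$. Hence a codimension $1$ point of $X$ can lie only over $\eta$ or over a point $P\in Y^{(1)}$, and in the latter case it is the generic point of a component $V$ of $X_P$. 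Intersecting the two unramifiedness conditions, $\Br(X)$ is exactly the set of $\alpha\in\Br(X_\eta)$ with $\partial_V(\alpha)=0$ for all $V\subset X_P$ and all $P\in Y^{(1)}$, which is the exactness of the top row.

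For the left-hand square I would invoke functoriality of the pullback $f^*$ on Brauer groups together with its compatibility with restriction to the generic fibre $X_\eta\to\eta=\spec k(Y)$: both composites send a class in $\Br(Y)$ to the restriction of its pullback in $\Br(X_\eta)$, so the square commutes. The right-hand square is the functoriality of residues under the field extension $k(Y)\subset k(X)$. For $P\in Y^{(1)}$ and a component $V\subset X_P$, the discrete valuation $v_P$ on $k(Y)$ extends to $v_V$ on $k(X)$ with ramification index equal to the multiplicity $m_V$ of $V$ in $X_P$; the standard local comparison of residue maps under such an extension gives $\partial_V(f^*\alpha)=m_V\cdot\res_{k(V)/k(P)}\bigl(\partial_P(\alpha)\bigr)$, which is precisely the induced vertical map $\bigoplus_P\Ho^1(k(P),\mathbb{Q}/\mathbb{Z})\to\bigoplus_{P,V}\Ho^1(k(V),\mathbb{Q}/\mathbb{Z})$ of the diagram.

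The main obstacle is this last residue compatibility: one must pin down the behaviour of the residue map under a possibly ramified extension of henselian discrete valuation fields and see the multiplicity $m_V$ emerge as the ramification index. The flatness bookkeeping identifying the codimension $1$ points of $X$ is the other point requiring care, but once it is in place and purity is granted, both rows are exact and the remaining compatibilities are formal.
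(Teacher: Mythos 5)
Your proposal is correct and takes essentially the same route as the paper's proof, which simply cites the residue/purity exact sequence of \cite[Theorem 3.7.3]{CTSK} for the bottom row, deduces exactness of the top row from the regularity of $X$ and flatness of $f$, and invokes functoriality of residues for the right-hand square. Your write-up merely expands the details the paper leaves implicit: the flatness bookkeeping identifying $X^{(1)}$ with $X_\eta^{(1)}\sqcup\{V\subset X_P : P\in Y^{(1)}\}$, and the local comparison $\partial_V(f^*\alpha)=m_V\cdot\res_{k(V)/k(P)}(\partial_P(\alpha))$ in which the multiplicity $m_V$ appears as the ramification index.
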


\begin{proof}
	The bottom row is exact by \cite[Theorem 3.7.3]{CTSK}. The top row is exact since $X$ is regular and $f$ is flat. Finally, The right part of the diagram commutes by the functoriality of residue.
\end{proof}

The following result is a weaker version of  \cite[Prop. 11.1.8]{CTSK}, where the base curve $\mathbb{P}^1$ is replaced by any integral smooth curve $C$. 

\begin{cor}\label{cor:VertBrOverC}
	Let $f:X\rightarrow C$ be a dominant morphism of smooth, proper, and geometrically integral varieties over a field $k$ of characteristic zero where $C$ is a curve. Suppose that $X_P$ is irreducible for all $P\in Y^{(1)}$ and let $K_P$ be the algebraic closure of $k(P)$ in $k(X_P)$. Then we have the following exact sequence
	$$0\rightarrow \br(Y) \rightarrow \rho^{-1}(\br(X))\rightarrow \displaystyle\bigoplus_{P\in Y^{(1)}} \ker(\res_{K_P/k(P)}). $$
\end{cor}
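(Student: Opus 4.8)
The plan is to run a diagram chase in \eqref{diag:vertbr}, specialised to the case where the base is a curve with irreducible fibres, the essential input being a comparison of the two restriction maps $\res_{k(V)/k(P)}$ and $\res_{K_P/k(P)}$. Since $C$ is a curve, the relevant points $P$ are exactly its closed points, and since $X_P$ is irreducible there is a single codimension one point $V\subset X$ lying over each such $P$, with $k(V)=k(X_P)$. Because $X$ is regular the top row of \eqref{diag:vertbr} is exact, and $j^*\colon \Br(X)\hookrightarrow \Br(X_\eta)$ is injective; hence for $\alpha\in\Br(k(C))$ we have $\alpha\in\rho^{-1}(\br(X))$ if and only if $\rho(\alpha)=f^*\alpha$ has vanishing residue $\partial_V(f^*\alpha)=0$ at the unique $V$ over each $P$.

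The key step is a constants lemma. As we are in characteristic zero, $K_P/k(P)$ is a finite separable extension, and by hypothesis $K_P$ is algebraically closed in $k(V)$. Therefore the restriction of absolute Galois groups $\Gal(\overline{k(V)}/k(V))\to \Gal(\overline{K_P}/K_P)$ is surjective, so that $\res_{k(V)/K_P}\colon \Ho^1(K_P,\mathbb{Q}/\mathbb{Z})\to \Ho^1(k(V),\mathbb{Q}/\mathbb{Z})$, being precomposition with a surjection, is injective. Factoring $\res_{k(V)/k(P)}=\res_{k(V)/K_P}\circ\res_{K_P/k(P)}$, I would deduce $\ker(\res_{k(V)/k(P)})=\ker(\res_{K_P/k(P)})$.

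I would then combine this with the commutativity of the right-hand square of \eqref{diag:vertbr}, which yields the residue compatibility $\partial_V(f^*\alpha)=m_V\cdot\res_{k(V)/k(P)}(\partial_P\alpha)$. Assuming reduced fibres, so that $m_V=1$, this reads $\partial_V(f^*\alpha)=\res_{k(V)/k(P)}(\partial_P\alpha)$, whence $\alpha\in\rho^{-1}(\br(X))$ if and only if $\partial_P\alpha\in\ker(\res_{k(V)/k(P)})=\ker(\res_{K_P/k(P)})$ for every $P$, by the constants lemma. In particular the family of residue maps $\{\partial_P\}$ carries $\rho^{-1}(\br(X))$ into $\bigoplus_{P}\ker(\res_{K_P/k(P)})$, supplying the last arrow. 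For exactness it remains to identify its kernel: by exactness of the bottom row of \eqref{diag:vertbr} one has $\Br(C)=\ker\big(\{\partial_P\}\colon\Br(k(C))\to\bigoplus_P\Ho^1(k(P),\mathbb{Q}/\mathbb{Z})\big)$, and moreover $\Br(C)\subseteq\rho^{-1}(\br(X))$, since for $\alpha\in\Br(C)$, regarded in $\Br(k(C))$ via $i^*$, the commutativity $\rho\circ i^*=j^*\circ f^*$ gives $\rho(\alpha)=j^*(f^*\alpha)$, which lies in the image of $\Br(X)$. Hence the kernel of $\{\partial_P\}$ restricted to $\rho^{-1}(\br(X))$ is precisely $\Br(C)$, and injectivity on the left follows from $\Br(C)\hookrightarrow\Br(k(C))$; this gives the claimed exact sequence.

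I expect the main obstacle to be the honest treatment of the multiplicities $m_V$. The residue formula only yields $m_V\cdot\res_{k(V)/k(P)}(\partial_P\alpha)=0$, which by the injectivity of $\res_{k(V)/K_P}$ forces $\res_{K_P/k(P)}(\partial_P\alpha)$ to be $m_V$-torsion rather than zero; so the clean target $\ker(\res_{K_P/k(P)})$ genuinely relies on the fibres being reduced, which is exactly the situation for the regular conic bundles of interest, where a regular total space over a regular curve forces $m_V=1$. Isolating the constants lemma from this reducedness point, and verifying that the irreducible fibres occurring here are indeed reduced, is the part demanding the most care; the surrounding diagram chase is then routine.
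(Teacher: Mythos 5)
Your proposal follows essentially the same route as the paper: the paper's proof of this corollary is nothing more than the diagram chase in \eqref{diag:vertbr} that you carry out explicitly, with the identification $\ker(\res_{k(V)/k(P)})=\ker(\res_{K_P/k(P)})$ (via surjectivity of $\Gamma_{k(V)}\to\Gamma_{K_P}$, using that $K_P$ is algebraically closed in $k(V)$) left implicit. The one step the paper \emph{does} make explicit is the one you omit: the diagram \eqref{diag:vertbr} is stated for dominant \emph{flat} morphisms, whereas the corollary assumes only dominance, so before invoking the diagram you must verify flatness. This is automatic here because $X$ is integral and $C$ is a regular integral curve \cite[Prop.~III.9.7]{Har}; without this remark your appeal to \eqref{diag:vertbr} is unjustified, so add it.

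On multiplicities, you are more careful than the paper: the commutative square only yields $\partial_V(\rho(\alpha))=m_V\cdot\res_{k(V)/k(P)}(\partial_P\alpha)$, so the stated target $\bigoplus_P\ker(\res_{K_P/k(P)})$ genuinely requires $m_V=1$, a hypothesis the corollary's statement (which says only ``irreducible'') silently assumes and which holds in the paper's intended application, regular conic bundles all of whose fibres are reduced irreducible conics. However, your justification---that a regular total space over a regular curve forces $m_V=1$---is false in that generality: minimal regular elliptic surfaces can have multiple fibres. It is true for conic bundles, because a multiple fibre would be a double line, say with local equation $x^2-\pi q(y,z)=0$, and the total space is then singular at the closed points of that line where $q$ vanishes (the local equation lies in $\mathfrak{m}^2$, so the tangent space has excess dimension). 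So state the reducedness of the fibres as a hypothesis (or verify it for conic bundles by this computation) rather than deriving it from regularity alone; with that and the flatness remark, your argument is complete.
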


\begin{proof}
	Since $X$ is integral and $C$ is a regular integral curve, $f$ is flat \cite[Prop. III.9.7]{Har}. The result now follows by \eqref{diag:vertbr} and the definition of the vertical Brauer group.
\end{proof}

\begin{rem}\label{rem:es1}
	If the base $C$ is $\mathbb{P}^1$, then we have an extended exact sequence
	$$0\rightarrow \br(k) \rightarrow \rho^{-1}(\br(X))\rightarrow \displaystyle\bigoplus_{P\in Y^{(1)}} \ker(\text{res}_{K_P/k(P)}) \rightarrow \Ho^1(k, \mathbb{Q}/\mathbb{Z}), $$
	which follows by the Faddeev exact sequence.
\end{rem}

\subsection{The Brauer group of a curve minus a point}
Let $C$ be a smooth integral curve over a field $k$ of characteristic zero. Let $U$ be the complement  of finitely many closed point in $C$. Then, by \cite[Theorem 3.7.3]{CTSK}, we have an exact sequence 
$$0\rightarrow \br(C) \rightarrow \br(U)\rightarrow \displaystyle\bigoplus_{P\in C\setminus U} \Ho^1(k(P), \mathbb{Q}/\mathbb{Z}) . $$
In the case $C=\mathbb{P}^1$, then by the Faddeev exact sequence we obtain an extended exact sequence
$$0\rightarrow \br(C) \rightarrow \br(U)\rightarrow \displaystyle\bigoplus_{P\in C\setminus U} \Ho^1(k(P), \mathbb{Q}/\mathbb{Z}) \rightarrow \Ho^1(k, \mathbb{Q}/\mathbb{Z}) $$
which completely determines $\Br(U)$ in terms of the residues $\bigoplus_{P\in C\setminus U} \Ho^1(k(P), \mathbb{Q}/\mathbb{Z})$. In other words, given a prescribed set of residues at the points $P\in C\setminus U$, we can determine whether there is an element of $\br(U)$ realizing these residues. For a higher genus curve $C$, this is not the case. Nonetheless, when $C\setminus U$ is a $k$-point we show that the Brauer group remains the same.

The proof of the following result was provided by A. Skorobogatov via an e-mail correspondence.

\begin{prop}\label{prop:BrCminusP}
	Let $C$ be a smooth integral proper curve over a number field $k$. Let $P\in C(k)$ and set $U:=C\setminus P$. Then
	$$\frac{\Br(U)}{\Br_0(U)}=\frac{\Br(X)}{\Br_0(X)}.$$
\end{prop}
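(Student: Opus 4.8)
The plan is to compare $\Br(U)$ and $\Br(C)$ by means of the Hochschild--Serre spectral sequence, reducing the whole statement to a computation of Picard groups over $\bar k$. Write $\bar C=C\times_k\bar k$ and $\bar U=U\times_k\bar k$. Since these are smooth curves over the algebraically closed field $\bar k$, their function fields are $C_1$ and Tsen's theorem gives $\Br(\bar C)=\Br(\bar U)=0$; hence $\Br(C)$ and $\Br(U)$ coincide with their algebraic parts. I would then write down the low-degree terms of the spectral sequence $\Ho^p(k,\Ho^q(\bar U,\mathbb{G}_m))\Rightarrow \Ho^{p+q}(U,\mathbb{G}_m)$ and its analogue for $C$, recalling that for a regular variety the cohomological Brauer group agrees with $\Br$.

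For $C$ the rational point $P$ splits the structure morphism, so the associated sequence collapses to
$$0\rightarrow \Br(k)\rightarrow \Br(C)\rightarrow \Ho^1(k,\Pic\bar C)\rightarrow 0,$$
identifying $\Br(C)/\Br_0(C)$ with $\Ho^1(k,\Pic\bar C)$. For $U$ one obtains in general only
$$0\rightarrow \Br(k)\rightarrow \Br(U)\rightarrow \Ho^1(k,\Pic\bar U)\rightarrow \Ho^3(k,\bar k^\times),$$
where injectivity on the left is clear because $\Br(k)\rightarrow \Br(U)$ factors through $\Br(C)$, both factors being injective (the second by the localization sequence recalled before the statement).

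The geometric core is to identify $\Pic\bar U$ as a Galois module. Restriction of divisor classes gives a surjection $\Pic\bar C\twoheadrightarrow\Pic\bar U$ with kernel the subgroup generated by the class of $\bar P$. As $P$ is $k$-rational this class is Galois-invariant of degree $1$, so it splits the degree sequence $0\rightarrow\Pic^0\bar C\rightarrow\Pic\bar C\rightarrow\z\rightarrow 0$ equivariantly. Thus $\Pic\bar C\cong\Pic^0\bar C\oplus\z$ with trivial action on $\z$, while $\Pic\bar U\cong\Pic^0\bar C$. Since $\Ho^1(k,\z)=0$, restriction induces an isomorphism $\Ho^1(k,\Pic\bar C)\xrightarrow{\sim}\Ho^1(k,\Pic\bar U)$, both being $\Ho^1(k,\Pic^0\bar C)$.

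Finally I would run the comparison. Functoriality of Hochschild--Serre for $U\hookrightarrow C$ gives a commutative square whose right vertical map is the isomorphism just produced and whose top map $\Br(C)\rightarrow\Ho^1(k,\Pic\bar C)$ is surjective; hence $\Br(U)\rightarrow\Ho^1(k,\Pic\bar U)$ is also surjective. This forces the obstruction in $\Ho^3(k,\bar k^\times)$ to vanish on the image, so the sequence for $U$ is short exact as well, and, $\Br_0(C)$ and $\Br_0(U)$ both being the image of $\Br(k)$, restriction descends to an isomorphism $\Br(C)/\Br_0(C)\xrightarrow{\sim}\Br(U)/\Br_0(U)$ (reading $\Br(X)$ in the statement as $\Br(C)$). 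I expect the main obstacle to be exactly this surjectivity, equivalently the vanishing of the $\Ho^3(k,\bar k^\times)$-term for the open curve $U$, which has no rational point of its own; the point is that the degree-$1$ point $P$ on $C$ supplies it by transport. As a shorter alternative, one may argue directly from the localization sequence $0\rightarrow\Br(C)\rightarrow\Br(U)\xrightarrow{\partial_P}\Ho^1(k,\q/\z)$: any $\alpha\in\Br(U)$ is unramified away from $P$, and the reciprocity law on the proper curve $C$ makes the sum over all closed points of the corestricted residues vanish; as $P$ is the only possible ramification point and $k(P)=k$, this yields $\partial_P(\alpha)=0$ and hence $\Br(U)=\Br(C)$ outright.
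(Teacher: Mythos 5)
Your main route is essentially the paper's own proof: both reduce to the algebraic Brauer group via Tsen's theorem, use the Hochschild--Serre sequence to identify $\Br_1/\Br_0$ with $\Ho^1(k,\Pic)$, and conclude from the Galois-equivariantly split sequence $0\to\z\to\Pic(\bar C)\to\Pic(\bar U)\to 0$ together with $\Ho^1(k,\z)=\Hom(\Gamma_k,\z)=0$. The one real difference in that route is how the $\Ho^3$-term is handled: the paper invokes the number-field hypothesis ($\Ho^3(k,\bar k^\times)=0$, \cite[Remark 5.4.3]{CTSK}) to make both edge maps isomorphisms, whereas you transport surjectivity of $\Br(U)\to\Ho^1(k,\Pic\bar U)$ from the proper curve (where the rational point $P$ kills the relevant differential) through the isomorphism $\Ho^1(k,\Pic\bar C)\cong\Ho^1(k,\Pic\bar U)$. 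That is correct, and it even makes the number-field assumption unnecessary at this step. Do record the one-line verification that $\bar k[\bar U]^\times=\bar k^\times$ (a unit on $\bar U$ has divisor supported at the single point $\bar P$ and of total degree zero, hence is constant); this is what puts the spectral sequence in the form you quote, and the paper states it without proof as well. Your reading of the statement's $\Br(X)$ as $\Br(C)$ is the intended one.

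Your shorter alternative is genuinely different from the paper and proves something stronger: reciprocity for residues on the smooth proper curve $C$ gives $\mathrm{cores}_{k(P)/k}\partial_P(\alpha)=0$ for any $\alpha\in\Br(U)$, and since $k(P)=k$ the corestriction is the identity, so $\partial_P(\alpha)=0$ and $\Br(U)=\Br(C)$ on the nose --- not merely equality of the quotients by $\Br_0$. This is in fact the form in which the proposition is used later (in the proof of Theorem \ref{thm:CB-pts-in-2EQ}, to rule out a class ramified only at the rational point $P_0$), and it works over any field of characteristic zero. The only gap is that the residue theorem for an arbitrary smooth proper curve is asserted, not justified: it is not the Faddeev sequence itself, which is special to $\mathbb{P}^1$. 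It does hold, and the standard derivation is to choose a finite $k$-morphism $C\to\mathbb{P}^1$ and use the compatibility $\partial_v\circ\mathrm{cores}_{k(C)/k(t)}=\sum_{w\mid v}\mathrm{cores}_{k(w)/k(v)}\circ\partial_w$ of residues with corestriction, reducing to Faddeev reciprocity for $k(t)$ applied to $\mathrm{cores}_{k(C)/k(t)}(\alpha)$ (see \cite{GS17} for these compatibilities). With that reference supplied, your two-line argument is complete, and it is arguably preferable to the paper's: it is more elementary, yields the sharper conclusion $\Br(U)=\Br(C)$, and isolates the role of the rationality of $P$ --- for a closed point of higher degree only the corestricted residue vanishes, which is consistent with $\Br(U)$ genuinely exceeding $\Br(C)$ in that case.
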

\begin{proof}
	First, by Tsen's theorem (see e.g. \cite[Theorem 1.2.12]{CTSK}) and \cite[Theorem 3.5.5]{CTSK}, we deduce that $\Br(U)=\Br_1(U)$ and $\Br(C)=\Br_1(C)$. Since  $\bar{k}[\bar{U}]^\times=\bar{k}^\times$ and $k$ is a number field, by \cite[Proposition 5.4.2, Remark 5.4.3]{CTSK}, we obtain the isomorphisms
	$$\br_1(U)/\Br_0(U)\cong \Ho^1(k, \Pic({U}^s)) $$ 
	and $$ \br_1(X)/\Br_0(X)\cong \Ho^1(k, \Pic({X}^s)).$$
	
	As $P\subset C(k)$, we have an exact sequence of $\Gamma_k$-modules
	$$0\rightarrow  \z  \rightarrow \Pic(\bar{C}) \rightarrow \Pic(\bar{U})\rightarrow 0$$
	which is right split by the degree map. Therefore, we have
	$$\Ho^1(k, \text{Pic}(\bar{U}))\cong \Ho^1(k, \text{Pic}(\bar{C})) \oplus \Ho^1(k, \z).$$
	Since $ \Ho^1(k, \z)=\Hom(\Gamma_k, \z)=0$, we deduce that 
	$$\br(U)/\br_0(U)\cong \Ho^1(k, \text{Pic}(\bar{U}))\cong \Ho^1(k, \text{Pic}(\bar{C}))\cong \br(C)/\br_0(C),$$
	as desired.
\end{proof}

\section{The Brauer group of a conic bundle over an elliptic curve}

\subsection{Conic bundles over curves}
Let $C$ be a smooth geometrically integral curve over a a field of characteristic zero and function field $k(C)$. Then any conic bundle over $C$ has a regular model where all the fibres are irreducible conics, which will allow us to restrict our study to regular conic bundles.

\begin{lem}[{{\cite[Lemma 11.3.2]{CTSK}}}]
	For any conic bundle $f:X\rightarrow C$ there exists a regular conic bundle $X'\rightarrow C$ such that $X'_{C}\cong X_C$ and and all the fibres of $X'\rightarrow C$ are reduced irreducible conics.
\end{lem}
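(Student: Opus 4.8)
The plan is to reduce everything to the generic fibre and then build a good integral model by hand through local modifications of a projective-bundle model. Set $K=k(C)$. The generic fibre $X_\eta$ is a smooth conic over $K$, hence is cut out in $\mathbb{P}^2_K$ by a nondegenerate ternary quadratic form, which after diagonalizing I may take to be $q=\langle a_0,a_1,a_2\rangle$ with $a_i\in K^\times$. Spreading the coefficients out, I would first produce a crude proper flat model $X_0\subset\mathbb{P}(\mathcal{E})\to C$ for a rank-$3$ locally free sheaf $\mathcal{E}$ equipped with a quadratic form $q\colon\mathcal{E}\to\mathcal{L}$ whose generic fibre is $X_\eta$; its fibres degenerate only over the finitely many points in the support of the degeneracy (discriminant) divisor.

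The heart of the argument is a local analysis at each such point $P$, with uniformizer $\pi$. Diagonalizing $q$ over the discrete valuation ring $\mathcal{O}_{C,P}$ gives valuations $(e_0,e_1,e_2)$ of the diagonal entries. Two operations are available without changing the generic conic: rescaling a single coordinate $x_i\mapsto \pi^{\pm1}x_i$, which shifts $e_i$ by $\pm2$ and corresponds to an elementary transformation (Hecke modification) of $\mathcal{E}$ at $P$; and rescaling the whole form by $\pi$, which shifts all $e_i$ by $1$ and leaves the zero locus $\{q=0\}$ unchanged. Using these I would reduce the local profile at every $P$ to either $(0,0,0)$ or $(0,0,1)$ after permutation, so in particular the non-reduced ``double line'' profiles such as $(0,1,1)$ are eliminated. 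A direct Jacobian computation then shows that the total space of the resulting model is regular: for profile $(0,0,0)$ the fibre is a smooth rank-$3$ conic, while for $(0,0,1)$ the only candidate singular point is the node $[0\!:\!0\!:\!1]$ of the fibre, where the derivative in the base direction is a unit, so the total space is smooth there and the fibre is a reduced rank-$2$ conic.

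The final step is to make every fibre irreducible. After the previous step the only reducible fibres are split rank-$2$ conics, i.e.\ pairs of lines $L_1\cup L_2$ both defined over $k(P)$ and meeting in a single (smooth) point of the total space, each $L_i$ being a $(-1)$-curve. I would contract one component $L_2$: this is an elementary transformation of the conic bundle, it does not alter the generic fibre, it leaves the total space regular, and it replaces the fibre over $P$ by a smooth conic, strictly decreasing the number of degenerate fibres. Since there are only finitely many degenerate fibres, iterating terminates, and the remaining rank-$2$ fibres are exactly the non-split ones, whose two components are Galois-conjugate over $k(P)$ and hence form an irreducible $k(P)$-scheme; these cannot be removed and correspond precisely to the ramification of the quaternion class of $X_\eta$. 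The resulting $X'\to C$ is then a regular conic bundle with the same generic fibre, all of whose fibres are reduced irreducible conics.

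I expect the main obstacle to be the second step: justifying rigorously that the local normal-form reductions can be realized by genuine global modifications of $\mathcal{E}$, so that one really produces a conic bundle over $C$ and not merely a collection of local statements, and checking that after these modifications the total space is regular at \emph{every} point of the degenerate fibres rather than only at their distinguished singular points. Verifying termination, and that the contractions in the last step do not disturb the already-good fibres, should be routine once the elementary-transformation formalism is in place.
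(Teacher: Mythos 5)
The paper does not actually prove this lemma --- it is quoted directly from \cite[Lemma 11.3.2]{CTSK} with no argument given --- so your proposal can only be measured against the proof in the cited source, and it essentially reproduces the standard construction found there: spread out the diagonalized generic conic to a quadric in a $\mathbb{P}^2$-bundle, diagonalize over each local ring $\mathcal{O}_{C,P}$, use the two moves (rescaling a coordinate by $\pi^{\pm 1}$, rescaling the form by $\pi$) to bring every valuation profile to $(0,0,0)$ or $(0,0,1)$, and check regularity of the total space by the Jacobian criterion at the node of each rank-$2$ fibre. Two remarks. First, a small imprecision: multiplying $q$ by $\pi$ does \emph{not} leave the zero locus unchanged over $P$ (the naive fibre there becomes all of $\mathbb{P}^2_{k(P)}$); what is true, and all you need, is that the combined moves produce a new model of the same generic conic --- your sheaf-theoretic framing via elementary transformations of $(\mathcal{E},q,\mathcal{L})$, with $\mathcal{L}$ replaced by $\mathcal{L}(P)$, makes this rigorous and also settles the local-to-global worry you raise, since over a curve one may alternatively glue local models along cofinite opens after spreading out an isomorphism of generic fibres. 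Second, your final contraction step, while workable for these surfaces (the two lines of a split fibre meet in a regular point of $X$, each is \'etale-locally a $(-1)$-curve, and Castelnuovo-type contraction over a non-closed field preserves regularity and yields a smooth fibre), can be avoided entirely by a residue argument: if the fibre at $P$ is a \emph{split} rank-$2$ conic, then $\partial_P(A)$ is trivial for the quaternion class $A$ of $X_{k(C)}$, so $A$ lies in the image of the injection $\Br(\mathcal{O}_{C,P})\rightarrow \Br(k(C))$ and extends to an Azumaya quaternion algebra over $\mathcal{O}_{C,P}$, whose associated conic is a smooth local model replacing the degenerate one in a single stroke; the fibres that survive are exactly the non-split rank-$2$ conics, which are reduced and irreducible over $k(P)$ as you say. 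This residue route is lighter than invoking birational contractions and matches the spirit of the cited proof, but your outline is correct as it stands.
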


\begin{lem}\label{lem:BrConicVert}
	Let $f:X\rightarrow C$ be a regular conic bundle. Then 
	$$\br(X)=\brv(X/C).$$
\end{lem}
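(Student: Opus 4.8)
The plan is to reduce the statement to a computation of the Brauer group of the generic fibre. By the definition recalled above, $\brv(X/C)$ is by construction a subgroup of $\br(X)$, so only the inclusion $\br(X)\subseteq \brv(X/C)$ requires proof. Since $X$ is regular and integral, the preceding remark gives $\br(X)\subseteq \br(X_\eta)$, so the restriction map $j^*\colon \br(X)\hookrightarrow \br(X_\eta)$ to the generic fibre is injective, and by definition a class $A\in\br(X)$ is vertical precisely when $j^*(A)$ lies in $\rho(\br(\eta))=\rho(\br(k(C)))$. Thus it suffices to prove that \emph{every} element of $\br(X_\eta)$ comes from $\br(k(C))$ via $\rho$; after this the fibration structure plays no further role.

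The key point is that for a regular conic bundle the generic fibre $Q:=X_\eta$ is a smooth conic over the field $F:=k(C)$, and hence $\bar Q\cong\mathbb{P}^1_{\bar F}$. I would compute $\br(Q)$ as follows. First, by Tsen's theorem \cite[Theorem 1.2.12]{CTSK} we have $\br(\bar Q)=0$, so by \cite[Theorem 3.5.5]{CTSK} every class on $Q$ is algebraic, i.e. $\br(Q)=\br_1(Q)$. Next, $Q$ is smooth, proper and geometrically integral, so $\bar F[\bar Q]^\times=\bar F^\times$, and the low-degree terms of the Hochschild--Serre spectral sequence yield an injection
$$\br_1(Q)/\br_0(Q)\hookrightarrow \Ho^1(F,\Pic(\bar Q)).$$
Finally $\Pic(\bar Q)=\Pic(\mathbb{P}^1_{\bar F})=\z$ with trivial Galois action, so $\Ho^1(F,\z)=\Hom(\Gamma_F,\z)=0$. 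Combining these three facts gives $\br(Q)=\br_1(Q)=\br_0(Q)=\rho(\br(F))$; that is, every Brauer class on the conic $Q$ descends to the base field.

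With this in hand the lemma is immediate: for any $A\in\br(X)$ the restriction $j^*(A)$ lies in $\br(Q)=\rho(\br(k(C)))$, so $A$ satisfies the defining condition of the vertical Brauer group, giving $\br(X)\subseteq\brv(X/C)$ and hence equality. I expect the only genuine point to check is the identity $\br(Q)=\rho(\br(F))$ for the conic. The mild subtlety there is that $F=k(C)$ is a function field rather than a number field, so I deliberately avoid invoking the full isomorphism $\br_1/\br_0\cong\Ho^1(F,\Pic)$ (which would require the vanishing of $\Ho^3(F,\bar F^\times)$) and use instead only the unconditional injection coming from the five-term exact sequence, which is all that is needed once $\Ho^1(F,\Pic(\bar Q))$ is known to vanish.
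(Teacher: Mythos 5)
Your proof is correct, and it shares the paper's overall skeleton --- both arguments reduce the lemma to the single statement that $\rho\colon \br(k(C))\to\br(X_\eta)$ is surjective, after which the defining condition of $\brv(X/C)$ is automatic --- but you establish that key surjectivity by a genuinely different route. The paper quotes the classical computation of the Brauer group of a conic (\cite[Proposition 7.2.1]{CTSK}), splitting into two cases: if the class $A\in\br(k(C))$ of the generic conic vanishes, then $X_\eta\cong\mathbb{P}^1_{k(C)}$ and $\br(X_\eta)\cong\br(k(C))$; if $A\neq 0$, it invokes the exact sequence $0\to\z/2\to\br(K)\xrightarrow{\rho}\br(X_K)\to 0$ with kernel generated by $A$. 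You instead prove surjectivity uniformly from first principles: Tsen's theorem together with \cite[Theorem 3.5.5]{CTSK} give $\br(Q)=\br_1(Q)$, the unconditional five-term injection gives $\br_1(Q)/\br_0(Q)\hookrightarrow \Ho^1(F,\Pic(\bar{Q}))$, and $\Pic(\bar{Q})\cong\z$ with trivial Galois action kills the target since $\Hom(\Gamma_F,\z)=0$. Your version is self-contained, avoids the case split, and is in fact more general --- it applies verbatim to any fibration whose smooth proper geometric generic fibre has trivial Brauer group and Picard group $\z$ with trivial action (e.g.\ Severi--Brauer bundles over curves) --- and your caution in using only the injection from the five-term sequence, rather than the full isomorphism $\br_1/\br_0\cong\Ho^1(F,\Pic)$ (which would require $\Ho^3(F,\bar{F}^\times)=0$ over the function field $F=k(C)$), is well placed. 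What the paper's citation buys in exchange is the precise kernel of $\rho$, namely $\langle A\rangle$: that extra information is irrelevant for the lemma itself, but the paper reuses the sequence \eqref{seq:conicBr} in the proof of Proposition \ref{prop:Br-conic-C} to identify $\rho^{-1}(\br(X))/\langle A\rangle\cong\brv(X/C)=\br(X)$, so if you continued with your approach you would need to supply the computation of $\ker\rho$ (Witt's theorem) separately at that later stage.
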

\begin{proof}
	Let $K=k(C)$ and let $A\in\br (K)$ be the class of the quaternion algebra corresponding to the conic $X_K$ over $K$. If $A=0$, then $X_K\cong \mathbb{P}_K^1$ so that $\br (X_K) \cong \br(\mathbb{P}_K^1)\cong \Br(K)$. If $A\neq 0$, i.e. $X_K$ has no $K$-point, then by \cite[Proposition 7.2.1]{CTSK} we have the exact sequence
	\begin{equation}\label{seq:conicBr}
		0\rightarrow \mathbb{Z}/2\rightarrow \br(K)\xrightarrow{\rho} \br(X_K)\rightarrow 0
	\end{equation} 
	where $1\in \z /2$ is mapped to $A\in \br(K)$.
	In both cases, by the definition of the vertical Brauer group, we deduce that $\br(X)=\brv(X/C).$
\end{proof}

Let $f:X\rightarrow C$ be a regular conic bundle all of whose fibres are reduced irreducible conics. Since the generic fibre is smooth, by spreading out, we deduce that the set of points $P\in C$ over which the fibre is singular is finite; denote this set by $S$. Let $P\in S$. The restriction of $X$ to $\spec(\mathcal{O}_P)$ is isomorphic to the closed subscheme of $\mathbb{P}_{\mathcal{O}_P}^2$ given by $x^2-ay^2-\pi z^2=0$ where the image of $a\in\mathcal{O}_P^*$ in $k(P)$ is not a square, which we denote by $a_P$. The closed fibre $X_P\subset \mathbb{P}_{k(P)}^2$ is given by $x^2-a_Py^2=0$.  Set $F_P=k(P)(\sqrt{a_P})$. 
We have $A=(a,\pi)\in \br (K)$, so that the residue of $A$ at $P$ is $$\partial_P(A)=a_P\in k(P)^*/ {k(P)^*}^2=\Ho^1(k(P),\z / 2) .$$
Finally, since $F_P/k$ is a quadratic extension, we have $\Ho^1(F_P/k(P),\z/2)\cong \z/2$.

We have the following weaker version of \cite[Proposition 11.3.4]{CTSK} where the base now is any regular geometrically integral curve $C$ instead of $\mathbb{P}^1$. 

\begin{prop}\label{prop:Br-conic-C}
	Let $k$ be a field of characteristic zero. Let $C$ be a regular geometrically integral curve and let $f:X\rightarrow C$ be a regular conic bundle all of whose fibres are reduced irreducible conics. Let $S$ be the finite set of points $P\in C$ such that $X_P$ is not smooth over $k(P)$. If the class $A\in \br(k(C))$ associated to the conic $X_{k(C)}$ is not in the image of $\br(C)\rightarrow \br(k(C))$ (i.e. $A$ ramifies), then we have an exact sequence
	$$0\rightarrow \br(C) \rightarrow \br(X) \rightarrow \displaystyle\bigoplus_{P\in S} (\z/2)_P /\langle \partial(A) \rangle , $$ where $\partial(A)\in \displaystyle\bigoplus_{P\in S} (\z/2)_P = \displaystyle\bigoplus_{P\in S} \Ho^1(F_P/k(P),\z/2)$ is the vector with coordinates $\partial_P(A)$.
\end{prop}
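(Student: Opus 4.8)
The plan is to obtain the sequence by quotienting the exact sequence of Corollary \ref{cor:VertBrOverC} by the class $A$, after identifying $\br(X)$ with the vertical Brauer group via Lemma \ref{lem:BrConicVert}. First I would apply Corollary \ref{cor:VertBrOverC} with the base equal to $C$: each fibre $X_P$ is a reduced irreducible conic, hence irreducible, so the hypotheses hold and I obtain
$$0\to\br(C)\to\rho^{-1}(\br(X))\xrightarrow{\psi}\bigoplus_{P\in C^{(1)}}\ker(\res_{K_P/k(P)}),$$
where $\psi$ is the total residue map $c\mapsto\{\partial_P(c)\}_P$ and $K_P$ is the algebraic closure of $k(P)$ in $k(X_P)$. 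I then compute these kernels. For $P\notin S$ the fibre is a smooth, hence geometrically integral, conic, so $k(P)$ is algebraically closed in $k(X_P)$, giving $K_P=k(P)$ and $\ker(\res_{K_P/k(P)})=0$. For $P\in S$ the closed fibre is $x^2-a_Py^2=0$; setting $u=x/y$ and $w=z/y$ identifies its function field with $F_P(w)$, so $K_P=F_P$ and $\ker(\res_{F_P/k(P)})=\Ho^1(F_P/k(P),\z/2)=(\z/2)_P$. Hence the target of $\psi$ reduces to $\bigoplus_{P\in S}(\z/2)_P$.

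Next I would bring in the generic conic. As $X$ is regular and integral (its smooth generic fibre being geometrically integral), the remark following the definition of the vertical Brauer group gives $0\to\ker\rho\to\rho^{-1}(\br(X))\to\brv(X/C)\to0$. Since $A$ ramifies it is nonzero, so the sequence \eqref{seq:conicBr} yields $\ker\rho=\langle A\rangle\cong\z/2$; combined with Lemma \ref{lem:BrConicVert} this becomes
$$0\to\langle A\rangle\to\rho^{-1}(\br(X))\xrightarrow{q}\br(X)\to0.$$
The crucial compatibility is that $\psi(A)=\partial(A)$: the residue of $A=(a,\pi)$ is $a_P$, which vanishes off $S$ and equals the prescribed coordinate $\partial_P(A)$ on $S$. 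Therefore $\psi$ descends to a map $\bar\psi:\br(X)\to\bigoplus_{P\in S}(\z/2)_P/\langle\partial(A)\rangle$.

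Finally I would verify exactness of the resulting bottom row, which is where the ramification hypothesis does the real work. Because $A\notin\br(C)$ and $\langle A\rangle=\{0,A\}$, we have $\br(C)\cap\langle A\rangle=0$, so $q$ restricts to the injection $f^*:\br(C)\hookrightarrow\br(X)$. For exactness at $\br(X)$ I would run a short diagram chase: given $y\in\ker\bar\psi$, lift it to $x\in\rho^{-1}(\br(X))$, so that $\psi(x)\in\langle\partial(A)\rangle$; replacing $x$ by $x-A$ if necessary, which leaves $q(x)=y$ unchanged since $q(A)=0$, arranges $\psi(x)=0$, whence $x\in\br(C)$ by Corollary \ref{cor:VertBrOverC} and $y\in f^*(\br(C))$. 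Together these two points give the claimed exact sequence. I expect the main subtlety to be precisely this simultaneous passage to quotients — by $\langle A\rangle$ upstairs and by $\langle\partial(A)\rangle$ downstairs — where one must check that the injectivity of $f^*$ genuinely relies on $A$ being ramified, alongside the function-field computation pinning down $K_P=F_P$ over the singular fibres.
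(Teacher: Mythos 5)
Your proposal is correct and takes essentially the same route as the paper's proof: identify $\br(X)$ with $\brv(X/C)$ via Lemma \ref{lem:BrConicVert}, apply Corollary \ref{cor:VertBrOverC}, and pass to quotients by $\langle A\rangle$ upstairs and $\langle\partial(A)\rangle$ downstairs using the sequence \eqref{seq:conicBr}, with the ramification hypothesis guaranteeing $\br(C)\cap\langle A\rangle=0$. Your explicit diagram chase for exactness at $\br(X)$ and the computation $K_P=F_P$ (with $\ker(\res_{K_P/k(P)})=0$ off $S$) are precisely the details the paper's terser argument leaves implicit.
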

\begin{proof}
	First, by Lemma \ref{lem:BrConicVert} we have that $\br(X)=\brv(X/C).$
	By Corollary \ref{cor:VertBrOverC}, we have the exact sequence 
	$$0\rightarrow \br(C) \rightarrow \rho^{-1}(\br(X))\rightarrow \displaystyle\bigoplus_{P\in S} \Ho^1(F_P/k(P),\z/2) ,$$
	by observing that $\ker(\text{res}_{F_P/k(P)}) = \Ho^1(F_P/k(P),\z/2)$. Since $A$ is not in the image of $\br(C)\rightarrow \br(k(C))$, we get an exact sequence 
	$$0\rightarrow \br(C) \rightarrow \rho^{-1}(\br(X))/\langle A \rangle\rightarrow \displaystyle\bigoplus_{P\in S} \Ho^1(F_P/k(P),\z/2)/\langle \partial(A) \rangle.$$
	By the exactness of \eqref{seq:conicBr}, we obtain the exact sequence
	$$0\rightarrow \mathbb{Z}/2\rightarrow \rho^{-1}(\br(X)) \xrightarrow{\rho} \brv(X/C)\rightarrow 0 $$ where $1\in \z /2$ is mapped to $A\in \br(K)$. Therefore, we have $$\rho^{-1}(\br(X))/\langle A \rangle \cong \brv(X/C)=\br(X),$$ which completes the proof.
\end{proof}

\begin{rem}\label{cor:BrX/E}
	Under the assumptions of the previous proposition, $\br(X)/\br(C)$ is a subgroup of $\bigoplus_{P\in S} (\z/2)_P /\langle \partial(A) \rangle$ and hence is finite. This also gives an upper bound on the size of $\Br(X)$ modulo $\Br(C)$ in terms of the number of the points over which the fibre is singular. 
\end{rem}

\subsection{Conic bundles over elliptic curves}
We now restrict to conic bundles over an elliptic curve $E$ over a number field $k$. By applying Proposition \ref{prop:Br-conic-C}, we provide a full description the Brauer group of the conic bundle  when the points over which the fibre is singular are divisible by $2$ in $E(k)$ with isomorphic ramification fields $F_P$. 

\begin{thm}\label{thm:CB-pts-in-2EQ}
	Let $E$ be an elliptic curve over a number field $k$. Let $f:X\rightarrow E $ be a regular conic bundle all of whose fibres are irreducible reduced conics. Let $S$ be the finite set of closed points of $E$ the fibres over which are singular. Suppose that $A\in \br (k(E))$ associated to $X_{k(E)}$ is not in the image of $\br(E)\rightarrow \br(k(E))$.
	If $S\subseteq 2E(k)$, and $F_P=F_Q$ for all $P, Q \in S$, then
	$$\br(X)/ \br(E)\cong (\z/2\z)^{|S|-2}.$$ 
	Moreover, fixing an embedding $E\hookrightarrow \mathbb{P}^2$ and a point $P_0\in S$, the generators have the form
	$$B_P=\left(a, \frac{\ell_P(X,Y,Z)}{\ell_{P_0}(X,Y,Z)}\right) \in \Br(k(E))$$ for $P\neq P_0$, where $\ell_P$ is the tangent line at any point $Q_p$ satisfying $-2Q_P=P$, and $X,Y,Z$ are the coordinates of $\mathbb{P}^2$.
\end{thm}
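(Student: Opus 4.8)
The plan is to combine the exact sequence of Proposition \ref{prop:Br-conic-C} with a computation of exactly which residue vectors in $\bigoplus_{P\in S}(\z/2)_P$ are realised by classes of $\Br(k(E))$ unramified away from $S$. First I would unwind the identifications from the proof of Proposition \ref{prop:Br-conic-C}: writing $I$ for the image of the residue map $\rho^{-1}(\Br(X))\to\bigoplus_{P\in S}(\z/2)_P$, the isomorphism $\rho^{-1}(\Br(X))/\langle A\rangle\cong\Br(X)$ together with the exact sequence $0\to\Br(E)\to\rho^{-1}(\Br(X))\to I\to 0$ of Corollary \ref{cor:VertBrOverC} yields $\Br(X)/\Br(E)\cong I/\langle\partial(A)\rangle$. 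Because the ramification fields all coincide, $F_P=k(\sqrt a)$ for a fixed non-square $a\in k^\times$, so each $(\z/2)_P=\Ho^1(F_P/k,\z/2)$ is canonically $\langle a\rangle\subset k^\times/(k^\times)^2$ and $\partial(A)$ is the all-ones vector $(1,\dots,1)$. The entire computation thus reduces to showing that $I$ equals the even-weight subspace of $(\z/2)^{S}$, of order $2^{|S|-1}$.

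For the inclusion $\{\text{even weight}\}\subseteq I$ I would realise each even-weight vector by an explicit symbol. Given $T\subseteq S$ with $|T|$ even, I want $f\in k(E)^\times$ with $\Divisor(f)\equiv\sum_{P\in T}P\pmod{2\,\Divisor(E)}$; then $\partial_Q(a,f)=a^{v_Q(f)}$ is trivial off $T$ and equals $a$ on $T$, so $(a,f)\in\rho^{-1}(\Br(X))$ realises the indicator of $T$. Such an $f$ exists if and only if the class of $\sum_{P\in T}P$ vanishes in $\Pic(E)/2\Pic(E)\cong\bigl(E(k)/2E(k)\bigr)\oplus\z/2$. Its degree component is $|T|\bmod 2=0$, and its $\Pic^0$ component is the group sum $\boxplus_{P\in T}P$, which lies in $2E(k)$ precisely because $S\subseteq 2E(k)$. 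This is the step where the divisibility hypothesis does its work.

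The main obstacle is the reverse inclusion $I\subseteq\{\text{even weight}\}$, and here I would invoke Proposition \ref{prop:BrCminusP}. Since $I$ is a subgroup already containing every even-weight vector, an odd-weight element of $I$ would force some weight-one vector $e_{P_1}$ into $I$; so it suffices to rule those out. A class realising $e_{P_1}$ lies in $\Br(E\setminus P_1)$ and is ramified at the single $k$-rational point $P_1$. But Proposition \ref{prop:BrCminusP}, together with the fact that $\Br_0(E)=\Br_0(E\setminus P_1)$ as subgroups of $\Br(k(E))$ (both being the image of $\Br(k)$), forces $\Br(E\setminus P_1)=\Br(E)$; hence any such class is in fact unramified at $P_1$, a contradiction. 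Therefore $I$ is the even-weight subspace, $|I|=2^{|S|-1}$. Since $A\in\rho^{-1}(\Br(X))$ gives $\partial(A)\in I$, we learn in passing that $|S|$ is even, and dividing by $\langle\partial(A)\rangle$ yields $\Br(X)/\Br(E)\cong(\z/2)^{|S|-2}$.

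Finally, for the explicit generators I would compute the divisor of $\ell_P/\ell_{P_0}$ on $E$. As $P\in 2E(k)$, there is $Q_P\in E(k)$ with $-2Q_P=P$, and for $E\hookrightarrow\mathbb{P}^2$ a plane cubic with origin an inflection point, the tangent line $\ell_P$ at $Q_P$ meets $E$ in the divisor $2Q_P+P$. Hence $\Divisor(\ell_P/\ell_{P_0})=2Q_P+P-2Q_{P_0}-P_0$, whose odd-multiplicity points inside $S$ are exactly $P$ and $P_0$ (the contributions at $Q_P,Q_{P_0}$ are even), so $B_P=(a,\ell_P/\ell_{P_0})$ realises $e_P+e_{P_0}$. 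The $|S|-1$ vectors $\{e_P+e_{P_0}\}_{P\neq P_0}$ span the even-weight subspace, subject to the single relation $\sum_{P\neq P_0}(e_P+e_{P_0})=\partial(A)$ (valid since $|S|$ is even), which is trivial in $I/\langle\partial(A)\rangle$; thus the $B_P$ with $P\neq P_0$ form a generating set with exactly $|S|-2$ independent elements, as claimed.
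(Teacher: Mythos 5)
Your proposal is correct and follows essentially the same route as the paper: reduce via Proposition~\ref{prop:Br-conic-C} to computing the image $I$ of the residue map, identify it with the even-weight subspace (the paper's $V$, spanned by the tangent-line classes $B_P$) by using Proposition~\ref{prop:BrCminusP} to rule out weight-one residue vectors, and then quotient by $\langle\partial(A)\rangle$. Your separate realisation of arbitrary even-weight vectors via $\Pic(E)/2\Pic(E)$ is a harmless redundancy, since the classes $B_P$ already span that subspace, and your passing observation that $\partial(A)\in I$ forces $|S|$ to be even is a point the paper leaves implicit.
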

\begin{proof}
	Label the points in $S$ as $P_0, P_1, \ldots, P_{|S|-1}$ and let $a\in k$ be such that $\bar{a} \in k^*/ {k^*}^2$ corresponds to the quadratic extension $F_{P}$ for any $P\in S$ (which is independent of $P\in S$ by assumption). Fix an embedding $E\hookrightarrow \mathbb{P}^2$ and a point $P_0\in S$. Let $P\in S$. Since $P\in 2 E(k)$, there exists a point $Q_P\in E(k)$ satisfying $-2Q_P=P$, i.e., the tangent line at $Q_P$ intersects $E$ at $P$, which we denote by $\ell_P$. Therefore, we have  $$\text{div}(\ell_{P_i}/\ell_{P_0})=[P_i]+2[Q_{P_i}]-[P_0]-2[Q_{P_0}]$$ for all $i\in \{1,\ldots, |S|-1\}.$ Define the classes of the quaternion algebras
	$$B_{i}=\left(a, \frac{\ell_{P_i}(X,Y,Z)}{\ell_{P_0}(X,Y,Z)}\right) \in \Br(k(E)).$$
	By the residue formula \cite[(1.18) p.32]{CTSK}, for all $i$, we compute $$\partial_{P_i}(B_{i})=\overline{a}\in k^*/(k^*)^2, \quad \partial_{P_0}(B_{i})=\overline{a} \in k^*/(k^*)^2$$ and $\partial_{P}(B_{i})$ is trivial for all other closed points $P\in E$. Therefore, we deduce that ${B_i\in \rho^{-1}(\Br(X))}$ for all $i$.
	Hence, for all $i$, the image of $B_{i}$ along map $\partial$ in the exact sequence  
	$$0\rightarrow \Br(E)\rightarrow \rho^{-1}(\br(X)) \xrightarrow{\text{  }\partial \text{  }} \bigoplus_{i=0}^{|S|} \Ho^1(F_P/k(P),\z/2) = \bigoplus_{i=0}^{|S|} \z/2 $$ 
	is the vector having $1$ at the coordinates $0$ and $i$, and $0$ at all other coordinates. Therefore, $\{\partial (B_i)\}_{i=0}^{|S|-1}$ is a set of $|S|-1$ linearly independent {$\z/2$-vectors} which generate a {vector space} $V\subset \bigoplus_{i=0}^{|S|} \z/2$ of dimension $|S|-1$. 
	
	We claim that $\im(\partial)=V$. Indeed, for suppose that $V\subsetneq\im(\partial)$, i.e. $\im(\partial)=\bigoplus_{i=0}^{|S|} \z/2$. Then there exists $B\in \rho^{-1}(\Br(X))\setminus \Br(E)$ with residue of the form $(1,0,\ldots,0)\in \bigoplus_{i=0}^{|S|} \z/2$, i.e., $B$ ramifies only at the point $P_0$. Hence, $B\in \Br(E\setminus P_0)\setminus \Br(E)$, which contradicts Proposition \ref{prop:BrCminusP}. Therefore, by Proposition \ref{prop:Br-conic-C} we have
	$$\Br(X)/\Br(E) \cong \rho^{-1}(\Br(X))/(\langle A \rangle + \Br(E)) \cong V/\langle \partial(A) \rangle \cong  \bigoplus_{i=0}^{|S|-2} \z/2,$$
	where the last isomorphism follows since $\partial(A)\neq 0$. This completes the proof.
\end{proof}

\begin{exmp}[Sums of two squares on $E$]\label{exmp:conicbund-2tors}
	Let $E$ be the elliptic curve over a number field $k$ given by $$y^2=(x-a_1)(x-a_2)(x-a_3)$$ 
	where $a_1,a_2,a_3\in k$ and $a_1<a_2<a_3$. Set $P_i:=(a_i,0)$ and $P_0=O$.
	Consider the conic bundle $\pi: X \longrightarrow E$, given by the projectivization of $$x_0^2+x_1^2=yx_2^2 \quad \subset \mathbb{P}^2(x_0,x_1,x_2)\times \mathbb{A}^2(x,y).$$ 
	Then the fibre of a point $P=(x,y)\in E(k)$ has a $k$-point if and only if $y$ is a sum of two squares in $k$. 
	The singular fibres of lie above the points of $S=\{P_1, P_2, P_3, O\}$. Let $A \in \br(k(E))$ be the class associated to the generic fibre.
	For all $P\in S$, we have $\partial_P(A)=-1 \in \q^*/{\q^*}^2$ so that $$\partial(A)=(1,1,1,1) \in \displaystyle\bigoplus_{i=0}^3 (\z/2)_{P_i}.$$
	
	Suppose that $k=\q$. 
	We would like to find generators for $\br(X)/\br(E)$. The obvious quaternions $A_i=(-1,(X-a_iZ)/Z)$ for $i=1,2,3$ are unramified, hence belong to $\br(E)$. Let us suppose that $E$ has a non-trivial $4$-torsion point $Q$, so that $2Q=P_3$ for some $j\in \{1,2,3\}$, and let $\ell_{P_j}$ be the tangent line at $-Q$. Then, 
	$$B=\left(a, \frac{\ell_{P_j}(Z,Y,Z)}{Z}\right) \in \Br(k(E))$$
	ramifies only at $O$ and $P_j$ with residue equal to the class of $a$ in $\q^*/{\q^*}^2$. The image of $B$ in $\bigoplus_{i=0}^3 (\z/2)_{P_i}$ is the vector $(1,0,0,1)$. Then, we also have the element $A-B$ with residue $(0,1,1,0)$ that belongs to the same class as $B$ in $\Br(X)/\Br(E)\subset \bigoplus_{i=0}^3 (\z/2)_{P_i}/\langle \partial(A) \rangle$.  By Proposition \ref{prop:Br-conic-C}, we deduce that either $$\br(X)/\br(E)\cong \z/2\z \text{ or } \br(X)/\br(E)\cong \z/2\z\oplus \z/2\z,$$
	that is, there is at most one more generator. 
\end{exmp}

We now apply Theorem \ref{thm:CB-pts-in-2EQ} to compute the Brauer group of a class of surfaces analogous to that of Ch\^{a}telet surfaces.
\begin{cor}\label{exmp:chat-surface}
	Let $E\subset \mathbb{P}^2(X,Y,Z)$ be an elliptic curve over $k$ given by a Weierstrass equation. Let $\{P_1,\ldots, P_n\}\subset 2E(k)\setminus \{O\}$, for $n>0$, and let $t:= \#\{i : P_i \in  E(k)[2] \}$. Write $P_i=(x_i,y_i)\in E(k)$, for all $i$, and set $$f(x)=\prod_{i=1}^n (x-x_i).$$
	Let $X$ be a smooth compactification of the conic bundle
	$$X': \quad x_0^2-a x_1^2=f(x) x_2^2 \quad  \subset\mathbb{A}^2(x,y)\times \mathbb{P}^2(x_0,x_1,x_2),$$
	where $a\notin k^2$. Then 
	\[  \br(X)/ \br(E) \cong \left\{
	\begin{array}{ll}
		(\z/2\z)^{(2n-t-1)}, & \text{if $n$ is odd,} \\
		(\z/2\z)^{(2n-t-2)}, & \text{if $n$ is even.} \\
	\end{array} 
	\right. \]
\end{cor}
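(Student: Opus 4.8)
The plan is to identify $X$ with a regular conic bundle over $E$ and invoke Theorem \ref{thm:CB-pts-in-2EQ}; the whole content then lies in determining the singular locus $S$ and its cardinality. The bundle $X'$ is the one attached to the quaternion class $A=(a,f(x))\in\br(k(E))$, and its singular fibres sit exactly over the closed points $P$ at which $A$ ramifies. Since $a\in k^\times$ has trivial valuation at every $P$, the residue $\partial_P(A)$ is the class of $a$ when $v_P(f)$ is odd and is trivial otherwise; hence $S$ is precisely the odd-valuation locus of $f$, and the first step is to compute $\text{div}(f)$ on $E$.

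For this I would use the two elementary divisor computations on a Weierstrass curve. For a non-$2$-torsion point $P_i=(x_i,y_i)$ one has $\text{div}(x-x_i)=[P_i]+[-P_i]-2[O]$, so $f$ has simple (hence odd) valuation at both $P_i$ and $-P_i$. For a $2$-torsion point $P_i=(x_i,0)$, which is a ramification point of $x\colon E\to\mathbb{P}^1$, the coordinate $y$ is a uniformizer and $y^2=(x-x_i)\cdot(\text{unit})$ forces $\text{div}(x-x_i)=2[P_i]-2[O]$. Summing over $i$ yields $\text{div}(f)$: the $n-t$ non-torsion points contribute the $2(n-t)$ points $\{P_i,-P_i\}$ to $S$, while the contributions of the $t$ torsion points and of $O$ must be analysed separately according to parity.

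Before counting I would discharge the hypotheses of Theorem \ref{thm:CB-pts-in-2EQ}. The containment $S\subseteq 2E(k)$ holds because each $P_i\in 2E(k)$ by assumption and $2E(k)$ is a subgroup, so $-P_i\in 2E(k)$ as well; in particular every point of $S$ is $k$-rational. The ramification fields all coincide: since $a$ is a constant, $\partial_P(A)$ is the class of $a$ at every $P\in S$, whence $F_P=k(\sqrt{a})$ throughout, a genuine quadratic extension as $a\notin k^2$. Finally, once $S\neq\emptyset$ the class $A$ truly ramifies, so $A\notin\im(\br(E)\to\br(k(E)))$ and the last hypothesis is met. I would also record that $\br(X)$ is a birational invariant of smooth proper varieties, so it may be computed on the regular conic bundle model independently of the chosen smooth compactification.

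Theorem \ref{thm:CB-pts-in-2EQ} then gives $\br(X)/\br(E)\cong(\z/2\z)^{|S|-2}$, and it remains only to substitute the value of $|S|$. I expect the main obstacle to be precisely this count: one must decide, for the fibres lying over the ramification points of $x\colon E\to\mathbb{P}^1$ (the $2$-torsion points and $O$), whether they remain singular in the regular model, since there the valuation of $f$ doubles and the naive singular fibre of $X'$ need not survive resolution. Tracking these multiplicities carefully is what isolates the contribution $t$ of the torsion points and makes the behaviour at $O$ depend on the parity of $n$, producing the two cases of the statement. Once $|S|$ is pinned down the conclusion is immediate.
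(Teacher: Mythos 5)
Your overall strategy---pass to a regular model using birational invariance of the Brauer group, verify the hypotheses of Theorem \ref{thm:CB-pts-in-2EQ} (each $\pm P_i\in 2E(k)$, all ramification fields equal to $k(\sqrt{a})$, the class $A=(a,f(x))$ ramified), and then substitute $|S|$---is exactly the paper's, and everything up to the count is sound. The gap is that the count, which you yourself identify as ``the whole content'', is never carried out: you close by asserting that a careful analysis at the $2$-torsion points and at $O$ will ``produc[e] the two cases of the statement''. By your own criterion it cannot. Since $a$ is constant, $S$ is the odd-valuation locus of $f$, and the valuations you computed are $v_{\pm P_i}(f)=1$ for $P_i\notin E[2]$, $v_{P_i}(f)=2$ for $P_i\in E[2]$, and $v_O(f)=-2n$ (as $x$ has a double pole at $O$), which is even for \emph{every} $n$. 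So the residue criterion excludes both the $2$-torsion points and $O$ from $S$ regardless of parity, forcing $|S|=2(n-t)$ and the exponent $2(n-t)-2$ for all $n$, not the claimed $2n-t-1$ (odd) versus $2n-t-2$ (even). No parity dependence at $O$ can emerge from your route, so the promised reconciliation with the stated formula is not merely unproven but contradicted by your own computation; the proposal as written proves a different exponent and then silently substitutes the statement's.

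The paper proceeds differently at precisely this point: it builds an explicit compactification by gluing $X'$ to the chart $y_0^2-ay_1^2=w^{n+1}f(1/w)y_2^2$ (resp.\ $w^nf(1/w)y_2^2$) over $w=1/x$, declares this model regular with singular fibres over all of $\{\pm P_i\}$ and, for odd $n$, over $O$, and reads off $|S|=2n-t+1$ resp.\ $2n-t$. This is in direct tension with your valuation computation: on $E$ (unlike on $\mathbb{P}^1$) the function $w=1/x$ has a \emph{double} zero at $O$, and likewise $v_{P_i}(f)=2$ at $2$-torsion points, so at the node of each such fibre the glued model has local equation $x_0^2-ax_1^2=t^2u\,x_2^2$ with $t$ a uniformizer and $u$ a unit, and is therefore not regular there; resolving makes those fibres smooth, consistent with the trivial residues, so they drop out of $S$. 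You sensed exactly this (``the naive singular fibre of $X'$ need not survive resolution'') but then deferred to the statement instead of settling it. To complete a proof you must either carry out the resolution at the even-valuation points and accept the count $|S|=2(n-t)$ that your criterion forces---flagging the resulting conflict with the stated exponents---or exhibit a genuinely regular conic-bundle model in which those fibres are singular, which the local description underlying Proposition \ref{prop:Br-conic-C} (singular fibre at $P$ implies nontrivial residue at $P$) rules out.
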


\begin{proof}
	First, since any two smooth compactifications of $X'$ are birationally equivalent, they will have isomorphic Brauer groups (see e.g. \cite[Corollary 6.2.11]{CTSK}). Thus, it suffices to compute the Brauer group for any smooth compactification. Let
	${\pi: X' \longrightarrow E\setminus O}$ be the projection map. Set $w:=1/x$ and let $X''\rightarrow E\backslash \{x=0\}$ be the conic bundle given by
	\[  \left\{
	\begin{array}{ll}
		y_0^2-ay_1^2=w^{n+1}f(1/w) y_2^2, & \text{if $n$ is odd,} \\
		y_0^2-ay_1^2=w^{n}f(1/w) y_2^2, & \text{if $n$ is even.} \\
	\end{array} 
	\right. \] 
	We define $X$ to be the gluing of $X'$ and $X''$ obtained by the relations 
	\begin{equation}\label{defn:Xgluing}
		w=1/x, \> y_i= w^{n+1} x_i \text{ when } n \text{ is odd and } w=1/x,\> y_i= w^{n} x_i \text{ when } n \text{ is even.}
	\end{equation} 
	Then $X$ is a smooth compactification for $X'$. Moreover, $X$ is a regular conic bundle all of whose fibres are irreducible reduced conics. Note that the fibre over $O\in E(k)$ is smooth when $n$ is even, and singular when $n$ is odd. Thus, the set of points $S$ in $E(k)$ over which the fibre is singular is
	\[  \left\{
	\begin{array}{ll}
		\{\pm P_i\}_{i=1}^n \cup \{O\}, & \text{if $n$ is odd,} \\
		\{\pm P_i\}_{i=1}^n, & \text{if $n$ is even.} \\
	\end{array} 
	\right. \] 
	with cardinalities $2n-t+1$ and $2n-t$, respectively. Let $A\in \br(\q(E))$ be class of the the quaternion algebra $\left(a,f(x)\right)$ corresponding to the generic fibre. By the residue formula \cite[(1.18) p.32]{CTSK}, we compute that  $\partial_{P_i}(A)\neq 0\in \z/2$ for $i$. Thus, $A$ ramifies, i.e., $A\notin \Br(E)$. Therefore, by Theorem \ref{thm:CB-pts-in-2EQ} the result follows.
\end{proof}

We now investigate the Brauer-Manin obstruction to weak approximation on this class of conic bundles.

\begin{prop}\label{prop:BMOtoWAonCoverE}
	Take the notation and assumptions in Theorem \ref{thm:CB-pts-in-2EQ}. 
	Suppose that $k=\q$, $a<0$, and $E(\mathbb{R})$ is connected. If $n>2$ and $S\cap E[2]=\emptyset$, then there exists $P,Q\in S$ and a Brauer group element 
	$$B=\left(a, \frac{\ell_{P}(x,y)}{\ell_{Q}(x,y)}\right) \in \Br(X)$$
	for some tangent lines $\ell_P$, $\ell_Q$ to $E$ intersecting $P$ and $Q$, respectively, such that  
	$$\inv_\infty\circ \ev_{B,\infty}: \> X(\mathbb{R}) \rightarrow  \z/2$$
	is surjective. In particular, we have
	$$X(\ad)_\bullet^{\pi^* (\Br E) + \langle B \rangle} \neq X(\ad)_\bullet^{\pi^* (\Br E)}.$$
\end{prop}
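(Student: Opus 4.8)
The plan is to take $B=B_P-B_Q=(a,\ell_P/\ell_Q)$ for a suitably chosen pair $P,Q\in S$, where $B_P,B_Q$ are the generators produced in Theorem \ref{thm:CB-pts-in-2EQ}; this lies in $\Br(X)$ by that theorem. Writing $g:=\ell_P/\ell_Q\in\q(E)^\times$ and using $a<0$, the real invariant of a point $x_\infty\in X(\mathbb{R})$ lying over $M:=\pi(x_\infty)$ is $\inv_\infty \ev_B(x_\infty)=\inv_\infty(a,g(M))$, which equals $1/2$ exactly when $g(M)<0$ and $0$ when $g(M)>0$. Hence surjectivity of $\inv_\infty\circ\ev_{B,\infty}$ is equivalent to the assertion that the image $\pi(X(\mathbb{R}))\subseteq E(\mathbb{R})$---the locus of $M$ whose fibre $X_M$ has a real point---meets both $\{g>0\}$ and $\{g<0\}$. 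First I would record the identification $\pi(X(\mathbb{R}))=\{M\in E(\mathbb{R}):A(M)=0\}$ up to the finite set $S$: since $\rho(A)=0$ in $\Br(X)$ (see the proof of Proposition \ref{prop:Br-conic-C}), the class $A(M)\in\Br(\mathbb{R})$ of the fibre conic vanishes for every $M\in\pi(X(\mathbb{R}))$, and a real conic has a real point iff its class is trivial.

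Next I would carry out the sign analysis on the circle $E(\mathbb{R})$ (one component by hypothesis). Because the residue of $A$ at each $P\in S$ is $\bar a$ with $a<0$, the map $M\mapsto\inv_\infty A(M)$ jumps by $1/2$ across each point of $S$ and is locally constant elsewhere; consistency around the loop forces $|S|$ to be even and partitions $E(\mathbb{R})\setminus S$ into $|S|/2$ \emph{real-fibre} arcs (where $A=0$) alternating with $|S|/2$ arcs where $A=1/2$. For $g$, the divisor $\Div(g)=[P]+2[Q_P]-[Q]-2[Q_Q]$, with $Q_P,Q_Q\in E(\q)$ the (real) tangency points, shows that $g$ changes sign on $E(\mathbb{R})$ exactly at $P$ and $Q$: the contributions of $Q_P,Q_Q$ have even order, and $g$ has neither zero nor pole at $O$. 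Thus $\{P,Q\}$ cuts the circle into two arcs on which $g$ has opposite constant signs, and $g$ is sign-constant on each real-fibre arc.

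With this picture the choice of $P,Q$ becomes combinatorial. Label $S$ cyclically so that the real-fibre arcs are $a_1,a_3,\dots,a_{|S|-1}$; the hypothesis $n>2$ ensures there are at least two of them, and I would take $P,Q$ to be the right endpoints of $a_1$ and of $a_3$ (the points of $S$ immediately following these arcs), so that $a_1$ and $a_3$ lie on opposite sides of the cut. Crossing from $a_1$ to $a_3$ passes exactly one of $P,Q$, so $g$ has opposite signs on these two real-fibre arcs; evaluating $B$ at real points of $X$ over $a_1$ and over $a_3$ then realises both $0$ and $1/2$, giving surjectivity. \emph{The main obstacle is exactly this step}: the jumps of $A$ and the sign changes of $g$ are coupled, since both happen at the shared points $P,Q$, so the two phenomena cannot be placed independently; separating two real-fibre arcs is what forces at least two such arcs to exist, which is precisely why $|S|=2$ fails and $n>2$ is needed (the hypotheses $S\cap E[2]=\emptyset$ and $a<0$ are what make all residues nontrivial over $\mathbb{R}$ and produce the clean even, alternating pattern). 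I would also dispose of the degenerate cases in which $Q_P$ or $Q_Q$ coincides with another point of $S$ or with $P,Q$, checking that the order at such a point remains even away from $\{P,Q\}$ and odd at $P,Q$, so that the sign-change set is unchanged.

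Finally, for the displayed consequence I would argue as follows. Any $\beta=\pi^*N\in\pi^*(\Br E)$ satisfies $\ev_{\beta,\infty}(x_\infty)=\ev_{N,\infty}(\pi(x_\infty))$, and since $E(\mathbb{R})$ is connected the continuous map $E(\mathbb{R})\to\Br(\mathbb{R})$, $M\mapsto N(M)$, is constant; hence $\ev_{\beta,\infty}$ is constant on $X(\mathbb{R})$. Starting from any adelic point $(x_v)$ orthogonal to $\pi^*(\Br E)$ (which exists whenever $X(\ad)_\bullet^{\pi^*(\Br E)}\neq\emptyset$, e.g. from a rational point of $X$), I would replace its real component $x_\infty$ by some $x_\infty'$ with $\inv_\infty \ev_B(x_\infty')=\inv_\infty \ev_B(x_\infty)+1/2$, available by the surjectivity just proved. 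For every $\beta\in\pi^*(\Br E)$ the sum $\sum_v\inv_v\ev_\beta$ is unchanged, because only the archimedean term could move and it is constant, so the new point still lies in $X(\ad)_\bullet^{\pi^*(\Br E)}$; but its $B$-sum has changed by $1/2$, so one of the two points is not orthogonal to $B$. This exhibits a point of $X(\ad)_\bullet^{\pi^*(\Br E)}$ outside $X(\ad)_\bullet^{\pi^*(\Br E)+\langle B\rangle}$, giving the strict inequality.
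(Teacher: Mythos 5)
Your proposal is correct, and it reaches the same reduction as the paper---find two real points $R_1,R_2\in E(\mathbb{R})$ whose fibres have real points and on which $\ell_P/\ell_Q$ takes opposite signs---but it executes the key combinatorial step differently. The paper fixes $P=P_1$, $Q=P_3$ after ordering the points by $x$-coordinate and normalising $y(P_i)>0$, and then checks signs by hand on four explicit segments $E_1,E_2,E_2',E_3$ of the half $\{y>0\}$ of the circle, using the sign alternation of the polynomial $f$ across its roots to locate arcs with $f>0$ (note that its branch ``$f>0$ on $E_2\cup E_3$'' is only accurate for $n=3$, since for larger $n$ the remaining roots $x_4,\dots,x_n$ flip the sign of $f$ inside $E_3$; the hypotheses force $n$ even, where the other branch applies, so the paper's conclusion stands). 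You instead argue intrinsically: the residue $\bar a$ with $a<0$ makes $\mathrm{inv}_\infty A(M)$ jump by $1/2$ at every point of $S$, producing the even, alternating pattern of ``real-fibre'' arcs, while the divisor $[P]+2[Q_P]-[Q]-2[Q_Q]$ shows $\ell_P/\ell_Q$ changes sign exactly at $P,Q$; you then choose $P,Q$ \emph{adaptively} to separate two real-fibre arcs. This buys uniformity---no coordinate bookkeeping, no case analysis on the configuration of tangency points beyond the parity check you already perform---at the cost of not exhibiting a fixed explicit pair as the paper does; since the statement only asserts existence of some $P,Q\in S$, the adaptive choice is legitimate. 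You also prove the displayed consequence (modifying the real component of an adelic point, using connectedness of $E(\mathbb{R})$ to make all $\pi^*(\Br E)$-invariants constant at the real place), which the paper treats as immediate and omits entirely; that is a genuine improvement in completeness. One point you flag but should pin down: the final strict inequality requires $X(\ad)_\bullet^{\pi^*(\Br E)}\neq\emptyset$, which neither hypothesis list states explicitly; in this setting it is automatic because the vertex $(0:0:1)$ of any singular fibre is a $k$-point of $X$, so a rational point always exists and yields an adelic point orthogonal to all of $\Br(X)$. Adding that one sentence would close the only gap in your write-up.
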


\begin{proof}
	First, order the points $P_i$ by the absolute value of their $x$-coordinate so that if $i>j$ then $x(P_i)>x(P_j)$. Also, we may assume that $y(P_i)>0$ for all $i$ by replacing $P_i$ with $-P_i$ if necessary. As each $P_i\in 2E(\q)$, there exists a line $\ell_i$ tangent to $E$ and intersecting $P_i$ at multiplicity $1$. It suffices to find two real points $R_1,R_2\in E(\mathbb{R})$ such that
	\begin{enumerate}
		\item $X_{R_i}(\mathbb{R})\neq \emptyset$ (equivalently $f(R_i)>0$) for $i=1,2$;
		\item and $\Sign(\ell_i(R_1)/\ell_j(R_1))\neq \Sign(\ell_i(R_2)/\ell_j(R_2)) $ for some $i,j\in \{1,\ldots,n\}$, since
	\end{enumerate}  
	\[ \inv_\infty \left( a, \frac{\ell_{i}(R_i)}{\ell_{j}(R_i)}\right) = \left\{
	\begin{array}{ll}
		0, & \text{ if } \ell_i(R_i)/\ell_j(R_i)>0 \\
		1, & \text{ if } \ell_i(R_i)/\ell_j(R_i)<0. \\
	\end{array} 
	\right. \]
	We claim that we can find such real points for $i=1$ and $j=3$. Indeed consider the following segments of $E(\mathbb{R})$:
	\[ \left\{
	\begin{array}{ll}
		E_1:=\{(x,y)\in E(\mathbb{R})\>:\> y>0,\> x<x(P_1) \} \\
		E_2:=\{(x,y)\in E(\mathbb{R})\>:\> y>0,\> x(P_2)>x>x(P_1) \} \\
		E'_2:=\{(x,y)\in E(\mathbb{R})\>:\> y>0,\> x(P_3)>x>x(P_2) \} \\
		E_3:=\{(x,y)\in E(\mathbb{R})\>:\> y>0,\> x>x(P_3) \} \\
	\end{array} 
	\right. \]
	Then either $f(x)>0$ for all $x\in E_1 \cup E'_2$ or $f(x)>0$ for all $x\in E_2 \cup E_3$. Moreover, one can verify that 
	\[ \left\{
	\begin{array}{ll}
		\Sign(\ell_1(x,y)/\ell_3(x,y)) \text{ is constant on } E_2\cup E'_2 \\
		\Sign(\ell_1(x,y)/\ell_3(x,y)) \text{ is constant on } E_1\cup E3 \\
		\Sign(\ell_1(x,y)/\ell_3(x,y)) \text{ attains both signs on } E_1\cup E_2\cup E'_2 \cup E_3
	\end{array} 
	\right. \]
	This shows the existence of the claimed real points $R_1$ and $R_2$, which completes the proof.
\end{proof}

\vspace{5pt}

\noindent\textbf{Proof of Theorem \ref{thm:intro}.} Let $E$ be an elliptic curve over $\q$ of positive rank such that $E(\mathbb{R})$ is connected. Let $\{P_1,\ldots, P_n\}\subset 2 E(\q)\setminus E[2]$ be a finite set of points where $n>2$. Then the conic bundle associated to $E$ and $\{P_1,\ldots, P_n\}$ in Example \ref{exmp:chat-surface} gives the desired example.

\end{document}